\theoremstyle{plain}
\newtheorem{theorem}{Theorem}
\newtheorem{lemma}[theorem]{Lemma}
\newtheorem{conj*}[theorem]{Conjecture}
\newtheorem*{claim*}{Claim}
\theoremstyle{definition}
\theoremstyle{remark}
\newtheorem*{remark*}{Remark}
\newcommand{\cS}{\mathcal{S}}
\newcommand{\cR}{\mathcal{R}}
\newcommand{\cC}{\mathcal{C}}
\newcommand{\cF}{\mathcal{F}}
\newcommand{\cB}{\mathcal{B}}
\newcommand{\cA}{\mathcal{A}}
\newcommand{\A}{\mathcal{A}}
\newcommand{\cX}{\mathcal{X}}
\begin{document}

\title{Boolean lattices: Ramsey properties and embeddings.}
 \author{ Maria Axenovich and   Stefan  Walzer \thanks{Karlsruhe Institute of Technology}}

 \maketitle

 \begin{abstract}
 A subposet   $Q'$ of a poset $Q$ is  a  {\it copy of a poset} $P$   if there is a bijection  $f$ between elements of $P$ and $Q'$ such that 
 $x\leq y$ in $P$  iff $f(x)\leq f(y)$ in $Q'$. 
 For  posets $P, P'$, let the {\it poset Ramsey number} $R(P,P')$ be the smallest $N$ such that no matter how the elements of the Boolean lattice $Q_N$ are colored red and blue, 
 there is a copy of $P$ with all red elements or a copy of $P'$ with all blue elements.  
We provide some general bounds on $R(P,P')$ and focus on the situation when $P$ and $P'$ are both Boolean lattices.
In addition, we give asymptotically tight bounds for the number of copies of $Q_n$ in $Q_N$ and for 
a multicolor version of a poset Ramsey number. 
 \end{abstract}

 \section{Introduction}

 A classical hypergraph Ramsey number $R(G,H)$ for $k$-uniform hypergraphs $G$ and $H$ is the smallest $n$ such that any red/blue edge-coloring 
  of a complete $k$-uniform hypergraph on $n$ vertices contains a red copy of $G$ or a blue copy of $H$. The existence of this number was proved by \textcite{Ramsey} in 1930, but  the problem of determining these and “multicolor”  numbers remains open and generates a lot of research activity, see for example  \cite{Conlon09, Conlon10, Conlon13, Cooley, AGLM, KMV}. 
 
 While classical Ramsey theory provides results about unavoidable uniform set systems such as $k$-uniform hypergraphs,  here we investigate Ramsey numbers for set systems with various set sizes treated as partially ordered sets.

  A {\it partially ordered set}, a {\it poset}, is a set with an accompanying relation “$\leq$”  that is transitive, reflexive, and antisymmetric. A {\it Boolean lattice} of dimension $n$, denoted  $Q_n$,  is the power set of an $n$-element ground  set $X$ equipped with inclusion relation, we also write $2^X$ for such a lattice. The $k$-th layer of $Q_n$ is the set of all $k$-element subsets of the ground set, $0 \leq k \leq n$.
 An injective mapping $f: P  \rightarrow P'$ is an {\it embedding} of a poset $P$ into another poset $P'$
 if for any $A, B\in  P$ we have  $A\leq  B$ in $P$ iff $f(A)\leq f(B)$ in $P'$.  In this case, we say that $f(P)$ is a  {\it 
copy}  of $P$ in $P'$.  The { \it  $2$-dimension} of  a poset $P$, defined by \textcite{Trotter75} and denoted  by $\dim_2(P)$,  is the smallest $n$ such that $Q_n$ contains a copy of $P$. An $n$-element antichain $A_n$  is a poset with any two elements not comparable,  an $n$-element chain $C_n$ is a poset with any two elements comparable.

 A very general problem of determining what posets have \emph{Ramsey property} was solved by   \textcite{NR}.
 Here, a poset $X$ has Ramsey property if for any poset $P$ there is a poset $Z$ such that when one colors the copies of $X$ in $Z$ red or blue, there 
 is a copy of $P$ in $Z$  such that all copies of $X$ in this copy of $P$ are red or all of them are blue. In the current  paper, $X$ is always the single-element poset, i.e., the elements of posets are colored, instead of more complicated substructures. In that case, the existence of $Z$ is guaranteed, so we direct our attention to quantitative aspects and the asymmetric case where the two colors are associated with different posets.

 Let $P$ and  $P'$ be two posets and $g$ be a poset parameter, 
 then $R_g(P,P')$ is the smallest $g'$ such that there is a poset $Z$ with $g(Z)=g'$ and such that in any red/blue coloring of $Z$ there is a copy of $P$ with all red elements or a copy of $P'$ with all blue elements. 
 \textcite{Trotter86}   considered $g$ to be width, height,  or size,  see  also \cite{TrotterSurvey, trotter-book, Paoli85}.  In this paper we initiate the study of Ramsey numbers for posets that extends the  previously defined notions and 
fits the concept of extremal functions of posets \cite{Patkos15, Methuku14}. 
 
For  posets  $P$ and $P'$, let the \textbf{\textit{poset  Ramsey number}}  $R_{\dim_2}(P, P')$  be the smallest $N$ such that any red/blue-coloring of $Q_N$ contains either a red copy of $P$ or a blue copy of $P'$.  We use simply $R(P, Q)$ instead of $R_{\dim_2}(P, Q)$ when it is clear from context.

For example,  $R(C_n, C_n)= 2n -2$ for a  chain $C_n$. Indeed, in a $2$-colored $Q_{2n-2}$ there is a chain with $2n-1$ elements, 
$n$ of those have the same color and form a copy of $C_n$;  a coloring of $Q_{2n-3}$ with $n-1$ layers in one color and $n-1$ layers in another color has no monochromatic copy of $C_n$. Another example  is  $R(A_n,A_n)= \min\{N: ~ 2n -1\leq \binom{N}{\lfloor N/2 \rfloor}\}$, for an  antichain $A_n$. To see that, observe that  at least half of  the elements in the middle layer in a $2$-colored $Q_N$ have the same color and there are at least $n$ of them. 
On the other hand, partition the elements of $Q_{N-1}$ into at most $2n-2$ chains. Such a partition exists, see for example \cite{vanLint}. 
Color $n-1$ chains red and the  remaining (at most $n-1$) chains blue. Since each antichain has at most one element from each of these chains, there is no monochromatic antichain with $n$ elements.
The focus of this paper is the case when $P$ and $P'$ are Boolean lattices themselves, i.e., $R(Q_n, Q_m)$.

 \begin{theorem} \label{thm:cube}
 For any integers $n, m \geq 1$, 
 \begin{enumerate}
 \renewcommand{\labelenumi}{(\roman{enumi})}  
 \item  $2n  \leq R(Q_n, Q_n) \leq n^2 +2 n$,
 \item  $R(Q_1, Q_n) = n+1$, 
 \item  $R(Q_2, Q_n) \leq  2n+2$,
 \item  $R(Q_n, Q_m) \leq mn+n+m, $
 \item  $R(Q_2, Q_2)= 4$,  ~$R(Q_3, Q_3) \in \{7,8\}$,
 \item  A Boolean lattice $Q_{3n \log(n)}$ whose elements  are colored red or blue randomly and independently  with equal probability  contains  a monochromatic  copy of $Q_n$ asymptotically  almost surely.
 \end{enumerate}
  \end{theorem}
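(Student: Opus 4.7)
The plan is a first-moment argument paired with Janson's inequality, applied not to interval copies (which are far too few when $N = 3n\log n$) but to the full family of embeddings $f\colon Q_n \to Q_N$, parametrised through upsets of $Q_n$. Every embedding corresponds bijectively to an $N$-tuple of upsets $(U_1,\dots,U_N)$ of $Q_n$ via $U_j = \{X \in Q_n : j \in f(X)\}$, and such a tuple is a valid embedding exactly when every pair $X \not\subseteq Y$ in $Q_n$ is separated by some $U_j$ containing $X$ but not $Y$. The $n$ principal upsets $\uparrow\!\{i\} = \{Z \subseteq [n] : i \in Z\}$ jointly satisfy this separation condition, so any $N$-tuple containing them is admissible. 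Placing them in $n$ of the $N$ coordinates and letting the remaining $N-n$ range freely over upsets of $Q_n$ yields at least $M(n)^{N-n}$ embeddings, where $M(n)$ is the Dedekind number with $\log_2 M(n) = \Theta(2^n/\sqrt n)$.

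Let $X$ count the monochromatic copies of $Q_n$ in the random colouring. Each copy has $2^n$ elements and is monochromatic with probability $2^{1-2^n}$; dividing the embedding count by $|\mathrm{Aut}(Q_n)|=n!$ and substituting $N=3n\log n$ gives
\[
\log_2 \mathbb{E}[X] \;\gtrsim\; (N-n)\log_2 M(n) - 2^n \;=\; \Omega\bigl(\sqrt n\,\log n \cdot 2^n\bigr) \;\to\; \infty,
\]
so $\mathbb{E}[X]$ blows up doubly-exponentially and the first-moment step is comfortable.

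The main obstacle is concentration. Applying Janson to the count $Y$ of all-red copies gives $\Pr[X=0] \leq \Pr[Y=0] \leq \exp\bigl(-\mathbb{E}[Y]^2/(2\Delta)\bigr)$ with
\[
\Delta \;=\; 2^{-2^{n+1}} \sum_{C \neq D,\; C \cap D \neq \emptyset} 2^{|C \cap D|}.
\]
Since $\mathbb{E}[Y]^2 \asymp |\mathcal{C}|^2 \cdot 2^{-2^{n+1}}$, it suffices to establish $\mathbb{E}_{C,D}\bigl[2^{|C \cap D|}\mathbf{1}_{C \cap D \neq \emptyset}\bigr] = o(1)$ for two independent uniform copies $C, D$ drawn from the above family. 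A first check is encouraging: a double counting of incidences yields $\mathbb{E}[|C \cap D|] \lesssim 2^{2n - N} = 2^{2n - 3n\log n} \to 0$, so most pairs of copies are disjoint at the level of the first moment. Promoting this to the required exponential-moment control of $|C \cap D|$, by classifying pairs of upset tuples according to the number (and structure) of coordinates on which they agree and bounding the contribution of each class, is the technical heart of the argument and the step I expect to be the main obstacle.
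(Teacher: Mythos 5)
Your proposal addresses only part (vi) of the theorem; parts (i)--(v), which contain the actual Ramsey bounds (the lower-bound coloring of $Q_{2n-1}$, the $\cB_Y$-construction giving $R(Q_n,Q_n)\le n^2+2n$, the Antichain Lemma arguments for $R(Q_1,Q_n)$ and $R(Q_2,Q_n)$, the Blob Lemma for $R(Q_n,Q_m)$, and the case analyses for $Q_2$ and $Q_3$), are not touched at all. Even restricted to (vi), what you have is not yet a proof. Your first-moment computation is fine (and your parametrisation of embeddings by $N$-tuples of upsets is exactly the paper's characteristic-vector bijection, Theorem \ref{fibers}, though the paper uses it only for the enumeration result), but a large expected number of monochromatic copies says nothing about almost-sure existence. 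The entire burden falls on the correlation term, and you explicitly defer it: showing $\mathbb{E}_{C,D}\bigl[2^{|C\cap D|}\mathbf{1}_{C\cap D\neq\emptyset}\bigr]=o(1)$ requires controlling pairs of copies that agree on all but a few coordinates of their upset tuples, where $|C\cap D|$ can be as large as $2^n-1$ and the naive bound $2^{|C\cap D|}\le 2^{2^n}$ overwhelms $|\mathcal{C}|$ once the number of differing coordinates approaches $N-n$. Your first-moment check $\mathbb{E}[|C\cap D|]\lesssim 2^{2n-N}$ is also suspect in direction: by convexity $\sum_A \Pr[A\in C]^2 \ge 2^{2n-N}$, with the true value inflated by the concentration of copies on middle layers. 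None of this is obviously fatal, but the step you name as ``the main obstacle'' is precisely the proof, and it is absent.

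The paper avoids all of this machinery. It fixes a partition $[N]=X_0\cup X_1\cup\cdots\cup X_{n+1}$ with $|X_0|=n$ and $|X_i|=m=2\log n$, and for each $S\subseteq X_0$ considers the antichain $\cF(S)=\{S\cup X_1\cup\cdots\cup X_{|S|}\cup X:\ X\subseteq X_{|S|+1},\ |X|=m/2\}$. These $2^n$ families are built so that choosing one red element $F_S$ from each automatically yields a red copy of $Q_n$, and each $\cF(S)$ fails to contain a red element with probability exactly $2^{-\binom{m}{m/2}}$. A union bound, $2^n 2^{-\binom{m}{m/2}}\to 0$, finishes the argument --- no second moment, no Janson, no analysis of overlapping copies. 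The lesson is that one should not try to exploit all $\approx a(n)^{N-n}$ copies; a designated subfamily in which the relevant events are independent and already numerous enough for a union bound is far cheaper. If you want to salvage your route, the overlap analysis would be a genuinely new (and harder) argument than anything in the paper, and you would still owe proofs of parts (i)--(v).
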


We prove a more general result in terms of the $2$-dimension and  height $h$ of a poset $P$, where
$h(P)$ is the number of elements in a largest chain of $P$.
The {\it  lexicographic  product}  $P\times Q$ of two posets is defined by taking  a disjoint union of $|P|$  copies of $Q$ and 
 making all elements in $i$th copy less than each element in $j$th copy iff  the $i$th element of $P$ is less than $j$th  element of $P$.
Formally,  the set of elements of   $P\times Q$  is a cartesian product of elements of $P$ and $Q$ and 
$(p_1, q_1)\leq (p_2, q_2)$ in    $P\times Q$     iff $p_1\leq p_2$ or  ($p_1=p_2$ and $q_1\leq q_2$). Note that the product operation is not symmetric, i.e., in general $P \times Q$ and $Q \times P$ are not isomorphic.
We say that a coloring of $2^S$ is {\it layered} for a set $S$ if for each $i$, $i=0, \ldots, |S|$ all subsets of $S$ of size~$i$ have the same color.

\begin{lemma} [Layered Lemma] \label{layer}
For any $n$ there is $N$ such that in any red/blue coloring $c$ of subsets of~$[N]$ there is a set $S\subseteq [N]$ such that $|S|=n$ and $c$ is layered on
$2^S$.  
\end{lemma}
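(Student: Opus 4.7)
The plan is to derive the Layered Lemma by iteratively applying Ramsey's theorem for uniform hypergraphs, one layer at a time. For a set $S$ of size $n$, the layers $\binom{S}{0}$ and $\binom{S}{n}$ each contain only one subset and are therefore automatically monochromatic, so it suffices to find $S\subseteq[N]$ with $|S|=n$ such that for every $k\in\{1,\ldots,n-1\}$ all $k$-subsets of $S$ share a common color.

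For $k\ge1$ and $m\ge k$, let $r_k(m)$ denote the classical hypergraph Ramsey number, that is, the smallest integer such that any $2$-coloring of $\binom{[r_k(m)]}{k}$ admits a set $T$ with $|T|=m$ and $\binom{T}{k}$ monochromatic. Define a sequence by $n_0:=n$ and $n_i:=r_i(n_{i-1})$ for $i=1,\ldots,n-1$, and set $N:=n_{n-1}$.

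The proof then processes layers from the top down. Begin with $T_{n-1}:=[N]$, which has size $n_{n-1}=r_{n-1}(n_{n-2})$, so Ramsey's theorem applied to the induced coloring of $\binom{T_{n-1}}{n-1}$ produces a set $T_{n-2}\subseteq T_{n-1}$ of size $n_{n-2}$ on which all $(n-1)$-subsets share a color. Iteratively, at step $k$ one is given $T_k$ of size $n_k=r_k(n_{k-1})$ and applies Ramsey for $k$-uniform hypergraphs to obtain $T_{k-1}\subseteq T_k$ of size $n_{k-1}$ on which all $k$-subsets share a color. After $n-1$ iterations the nested chain $T_0\subseteq T_1\subseteq\cdots\subseteq T_{n-1}$ yields a set $S:=T_0$ of size $n$, and setting $S$ to be the desired witness works because every $k$-subset of $S$ is also a $k$-subset of $T_k$, hence inherits the monochromaticity established at step $k$.

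No real obstacle arises beyond invoking the classical Ramsey theorem $n-1$ times and keeping track of inclusions; the key observation is simply that refining the ground set can only preserve, never destroy, the monochromaticity already obtained at a higher layer. The bound on $N$ produced this way is tower-type in $n$, but since the lemma asserts only existence this is of no concern here.
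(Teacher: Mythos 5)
Your proof is correct and follows essentially the same route as the paper: both iterate the hypergraph Ramsey theorem once for each uniformity $k=1,\ldots,n-1$, shrinking a nested chain of ground sets and using the fact that monochromaticity of a layer survives passing to subsets. The only cosmetic differences are that you process the layers top-down, giving $N=r_{n-1}(r_{n-2}(\cdots r_1(n)\cdots))$, whereas the paper goes bottom-up with $N=R^1(R^2(\cdots R^{n-1}(n)\cdots))$, and a harmless index slip at the end (a $k$-subset of $S$ is monochromatic because $S\subseteq T_{k-1}$, the set produced at step $k$, not merely because $S\subseteq T_k$).
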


 \begin{lemma}[Blob Lemma]\label{blob}
 Let $P$ be a poset and $m\geq 1$ be an integer. If $N= dim_2(P) + h(P)m $, then $Q_N$ contains a copy of  $P\times Q_m$.
 In particular, $R(P, Q_m) \leq dim_2(P) + h(P)m$. 
 \end{lemma}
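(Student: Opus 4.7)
The plan is to use the defining embedding of $P$ into a Boolean lattice together with extra coordinates that provide the $Q_m$-blobs, stacked by height levels so that comparabilities transfer correctly.

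Let $d = \dim_2(P)$, $h = h(P)$, and fix an embedding $\phi\colon P \hookrightarrow Q_d$ on ground set $[d]$. Take the remaining $hm$ coordinates of $[N]$ and partition them into $h$ disjoint blocks $B_1,\ldots,B_h$, each of size $m$. For every $p \in P$ let $\rho(p) \in \{1,\ldots,h\}$ be the length of the longest chain of $P$ ending at $p$; the crucial property is that $p < p'$ in $P$ forces $\rho(p) < \rho(p')$. For each $p \in P$ define the \emph{blob}
\[
\mathcal{B}_p \;=\; \bigl\{\,\phi(p) \,\cup\, B_1 \cup \cdots \cup B_{\rho(p)-1} \,\cup\, S \;:\; S \subseteq B_{\rho(p)}\bigr\} \subseteq Q_N.
\]
Each $\mathcal{B}_p$ is visibly a copy of $Q_m$, since the ``frozen'' part $\phi(p)\cup B_1\cup\cdots\cup B_{\rho(p)-1}$ is disjoint from $B_{\rho(p)}$ and $S$ ranges over all subsets of $B_{\rho(p)}$.

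Next I would verify that $\bigcup_{p\in P}\mathcal{B}_p$ is a copy of $P\times Q_m$. If $p < p'$ in $P$, then $\phi(p)\subsetneq \phi(p')$ and $\rho(p) \leq \rho(p')-1$, so for any $S\subseteq B_{\rho(p)}$ and $S'\subseteq B_{\rho(p')}$,
\[
\phi(p)\cup B_1\cup\cdots\cup B_{\rho(p)-1}\cup S \;\subseteq\; \phi(p')\cup B_1\cup\cdots\cup B_{\rho(p')-1}\cup S',
\]
because the left-hand side is contained in $\phi(p')\cup B_1\cup\cdots\cup B_{\rho(p)}$ and $\rho(p)\leq \rho(p')-1$. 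If $p$ and $p'$ are incomparable, then $\phi(p)$ and $\phi(p')$ are incomparable in $Q_d$; since every element of $\mathcal{B}_p$ meets $[d]$ exactly in $\phi(p)$ (and likewise for $p'$), no element of $\mathcal{B}_p$ is comparable with any element of $\mathcal{B}_{p'}$. Finally, within one blob $\mathcal{B}_p$ the containment order is precisely that of $Q_m$ on $B_{\rho(p)}$. This matches the definition of $P\times Q_m$.

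For the Ramsey consequence, consider any red/blue coloring of $Q_N$. If some blob $\mathcal{B}_p$ is entirely blue, it is a blue copy of $Q_m$ and we are done. Otherwise, for each $p\in P$ pick a red element $r_p \in \mathcal{B}_p$; by the structural analysis above, $r_p \leq r_{p'}$ if and only if $p \leq p'$ in $P$, so $\{r_p : p \in P\}$ is a red copy of $P$.

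The main obstacle, such as it is, is organizing the blocks $B_i$ so that the comparability in $P$ is faithfully captured: this is exactly what motivates indexing the blocks by the height-rank $\rho(p)$, so that strictly smaller elements of $P$ use strictly fewer blocks and the blocks ``nest'' along chains. Once this indexing is in place, both the chain case (inclusion of frozen parts) and the antichain case (incomparability witnessed inside $[d]$) fall out immediately, and no calculation is needed.
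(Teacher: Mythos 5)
Your proof is correct and follows essentially the same route as the paper: fix an embedding of $P$ into $Q_{\dim_2(P)}$, partition the remaining $h(P)m$ ground elements into $h(P)$ blocks of size $m$, and attach to each $p\in P$ a blob built from $\phi(p)$, the first $\rho(p)-1$ blocks, and an arbitrary subset of block $\rho(p)$, where $\rho(p)$ is the height-rank of $p$; the Ramsey consequence is then derived exactly as in the paper. The only differences are notational (the paper writes $h(p)$ for your $\rho(p)$ and phrases the construction as a single map $g((p,S))$), so nothing further is needed.
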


 \begin{lemma}[Antichain Lemma]\label{antichains}
 If in a red/blue coloring of $2^{[N]}$ the red elements form  antichains $\cA_1, \ldots, \cA_{\ell}$, $\ell<N$,  such that 
 $A_i$ is a set of minimal elements in $\cA_i\cup \cdots \cup \cA_{\ell}$, $i=1, \ldots, \ell$, then  
 there is a blue copy of $2^{[N-\ell]}$.
 \end{lemma}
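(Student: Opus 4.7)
The plan is to induct on $\ell$. The base case $\ell = 0$ is immediate: nothing is red, and $2^{[N]}$ itself is a copy of $Q_{N-0}=Q_N$. The inductive step rests on the following sub-claim, which I will establish first.

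\emph{Sub-claim.} For every antichain $\cA$ in $Q_N$ with $N\geq 1$, there is a poset embedding $\phi:Q_{N-1}\hookrightarrow Q_N$ whose image avoids $\cA$.

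I will prove the sub-claim by constructing $\phi$ of the explicit form $\phi(A)=A\cup f(A)$ for $A\subseteq [N-1]$, where $f:2^{[N-1]}\to\{\emptyset,\{N\}\}$ is monotone; any such monotone $f$ automatically yields a poset embedding (order-preserving both ways). Avoiding $\cA$ amounts to two families of forced values of $f$: one must set $f(A)=\{N\}$ whenever $A\in\cA$, and $f(A)=\emptyset$ whenever $A\cup\{N\}\in\cA$. These forced values never collide on a single $A$, since $A$ and $A\cup\{N\}$ are comparable and cannot both lie in the antichain $\cA$. The more delicate check is monotonicity: a violation would require $A\subseteq A'$ with $f(A)=\{N\}$ and $f(A')=\emptyset$ both forced, i.e., $A\in\cA$ and $A'\cup\{N\}\in\cA$; but then $A\subsetneq A'\cup\{N\}$ would be a comparable pair inside $\cA$, again contradicting the antichain property. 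Hence I may define $f=\mathbf{1}_U$, where $U$ is the up-set of $2^{[N-1]}$ generated by $\cA\cap 2^{[N-1]}$; this satisfies every constraint and gives the desired embedding.

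Granted the sub-claim, the inductive step runs as follows. Apply the sub-claim to $\cA_1$ and obtain a sub-poset $L\cong Q_{N-1}$ of $Q_N$ disjoint from $\cA_1$. The peeling hypothesis implies that the rank $j$ defined by $x\in\cA_j$ strictly increases along any red chain (if $x<y$ with $x\in\cA_i$, $y\in\cA_j$ and $j\leq i$, then $y$ would fail to be minimal in $\cA_j\cup\cdots\cup\cA_\ell$). Since every red element of $L$ has rank $\geq 2$, no red chain inside $L$ exceeds length $\ell-1$. Re-peeling the red elements inside $L$ therefore produces a new system $\cB_1,\ldots,\cB_{\ell'}$ of $\ell'\leq \ell-1$ antichains, which by construction satisfies the peeling hypothesis in $L$. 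Because $\ell'\leq \ell-1<N-1$, the induction hypothesis applied inside $L\cong Q_{N-1}$ yields a blue copy of $Q_{(N-1)-\ell'}\supseteq Q_{N-\ell}$, completing the step.

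The main obstacle is the sub-claim, and within it the verification that the forced values of $f$ are monotonically consistent; this is precisely where the antichain hypothesis on $\cA$ is essential, ruling out the only configuration that could give a conflict.
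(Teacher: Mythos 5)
Your proof is correct. The core device is the same as the paper's: to dodge a single antichain $\cA$ you shift a copy of a smaller cube ``up'' by one fresh coordinate exactly on the up-set generated by $\cA$, and the antichain property is what guarantees both consistency of the forced values and that the shifted image misses $\cA$. Where you genuinely diverge is in how the induction is organized. The paper fixes the initial copy $2^{\{\ell+1,\dots,N\}}$ and performs all $\ell$ shifts inside the same ambient $Q_N$, carrying the original antichains $\cA_1,\dots,\cA_\ell$ along; the price is that at step $i$ it must verify that the shift used to escape $\cA_i$ does not push an element back into some earlier $\cA_j$, and this is exactly where the minimality hypothesis is invoked (a set of $\cA_j$ lying above a set of $\cA_i$ would contradict $\cA_j$ consisting of minimal elements). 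You instead isolate a clean one-antichain sub-claim, pass to the resulting copy of $Q_{N-1}$, and \emph{re-peel} the remaining red elements there; the minimality hypothesis enters only through the observation that the peeling index strictly increases along red chains, so the red height inside the new cube drops to at most $\ell-1$ and the recursion terminates after $\ell$ steps. Your version buys modularity (the sub-claim is a statement about a single arbitrary antichain, with no reference to the peeling structure) at the cost of having to transport the coloring through the isomorphism $Q_{N-1}\cong L$ at each level; the paper's version avoids re-peeling but needs the more delicate non-re-entry check. Both are complete; note only that your sub-claim's fresh coordinate $N$ may occur in sets of $\cA$, which you do handle correctly via the forced constraint $f(A)=\emptyset$ when $A\cup\{N\}\in\cA$, whereas the paper sidesteps this by always choosing the fresh coordinate disjoint from the current copy's ground set.
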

 
\noindent We give some more definitions and prove Lemmas \ref{layer},  \ref{blob}, \ref{antichains}   in Section~\ref{definitions}.
We prove Theorem~\ref{thm:cube} in Section~\ref{cube}.

In studying Ramsey properties of posets, specifically Boolean lattices, we prove a result of  independent interest concerning  embeddings of $Q_n$ into $Q_N$.  We describe a bijective mapping between the set of such embeddings and  a set of some special sequences in Section \ref{enumerate}. 
As a corollary, we determine  the number of such embeddings.

  \begin{theorem}\label{enum}
 Let $n \leq N$. The number $e(n,N)$ of embeddings of $Q_n$ into $Q_N$ satisfies the inequality
 $$ \frac{N!}{(N-n)!} (a(n)-n)^{N-n} \leq e(n, N) \leq   \frac{N!}{(N-n)!} a(n)^{N-n}, $$ where $a(n)$ is the number of distinct antichains in $2^{[n]}$.
 In particular 
  $$  e(n, N) =   2^{\binom{n}{\lfloor \frac{n}{2} \rfloor }  (N-n)(1+o(1))}.$$
  \end{theorem}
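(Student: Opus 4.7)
The plan is to set up a bijection between embeddings $f : Q_n \to Q_N$ and sequences $(C_1, \ldots, C_N)$ of antichains in $Q_n$ satisfying a simple covering condition, and then count those sequences. Given an embedding $f$, for each coordinate $j \in [N]$ the set $U_j = \{A \subseteq [n] : j \in f(A)\}$ is an up-set in $Q_n$, so its antichain $C_j$ of minimal elements encodes it faithfully. The assignment $f \mapsto (C_1, \ldots, C_N)$ is injective, and conversely any sequence of antichains yields an order-preserving map via $f(A) = \{j : A' \subseteq A \text{ for some } A' \in C_j\}$. So $e(n,N)$ is exactly the number of sequences whose induced $f$ is an embedding (not merely order-preserving).

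The crux is the characterization: such an $f$ is an embedding if and only if for every $i \in [n]$ some $C_j$ equals $\{\{i\}\}$. Necessity comes from separating $\{i\}$ from $[n]\setminus\{i\}$: an embedding produces a coordinate $j$ with $\{i\} \in U_j$ and $[n]\setminus\{i\} \notin U_j$, which forces $\{i\}$ to be the unique minimal element of $U_j$. Indeed, $\emptyset \in C_j$ would make $U_j = 2^{[n]}$, and any other minimal element $A \in C_j$ would be incomparable with $\{i\}$, hence $i \notin A$, hence $A \subseteq [n]\setminus\{i\}$ and $[n]\setminus\{i\} \in U_j$, a contradiction. Sufficiency is direct: if $A \not\subseteq B$, any $i \in A \setminus B$ produces a coordinate $j$ with $C_j = \{\{i\}\}$ that separates them.

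With the characterization in hand, the two bounds follow by double-counting pairs $(\sigma, \pi)$, where $\sigma = (C_1, \ldots, C_N)$ is a valid sequence and $\pi : [n] \hookrightarrow [N]$ is an injection with $C_{\pi(i)} = \{\{i\}\}$. Summing over $\pi$ first gives exactly $\frac{N!}{(N-n)!}\, a(n)^{N-n}$ pairs, and every valid $\sigma$ admits at least one witness $\pi$, yielding the stated upper bound. For the lower bound I restrict to sequences with a \emph{unique} witness: choose $\pi$ in $\frac{N!}{(N-n)!}$ ways, set $C_{\pi(i)} = \{\{i\}\}$, and fill each of the $N-n$ remaining coordinates with any of the $a(n) - n$ antichains other than the singletons $\{\{1\}\}, \ldots, \{\{n\}\}$; distinct choices produce distinct sequences.

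For the asymptotic identity I would invoke the classical Kleitman--Markowsky bound $\log_2 a(n) = \binom{n}{\lfloor n/2 \rfloor}(1+o(1))$ for the Dedekind number; the prefactor $\frac{N!}{(N-n)!} \leq N^n$ contributes only $O(n \log N)$ to the exponent, which gets absorbed into the $(1+o(1))$ term. I expect the main subtlety to be the necessity direction of the characterization, specifically the case analysis ruling out every antichain other than $\{\{i\}\}$ as a candidate for $C_j$; once that is pinned down, the bijection, the counting, and the Dedekind asymptotic are all routine.
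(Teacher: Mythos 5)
Your proposal is correct and follows essentially the same route as the paper: the paper's Theorem \ref{fibers} establishes exactly your bijection (phrased via the ``characteristic vector'' of upper-closed sets $U_j$, each determined by its antichain of minimal elements, with the condition that every $\{i\}^+$ appears), and the paper's counting for both bounds and the appeal to Kleitman's asymptotic for $a(n)$ match yours step for step. The necessity argument you flag as the main subtlety is handled in the paper in the same way, by separating $f(\{i\})$ from $f([n]\setminus\{i\})$ to force some $U_j=\{i\}^+$.
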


The {\it multicolor Ramsey number}  for a poset $P$ is defined, for a given $k$,  to be the smallest integer $n$ such that any coloring of $Q_n$ in $k$ colors contains a copy of $P$ in one of the colors. This number is denoted by  $R_k(P)$. The following theorem is proved in   Section \ref{Multicolor}.

\begin{theorem}\label{multicolor}
For any poset $P$ that is not an antichain,   $R_k(P)= \Theta(k)$. 
\end{theorem}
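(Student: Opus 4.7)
The plan is to prove \(R_k(P) = \Theta(k)\) by establishing matching lower and upper bounds. For the lower bound \(R_k(P) \geq k\), I would color \(Q_{k-1}\) by layer, assigning color \(i+1\) to every \(i\)-element subset of \([k-1]\), for \(i = 0, \ldots, k-1\). Each color class is a single layer and hence an antichain; since \(P\) is not an antichain, no monochromatic copy of \(P\) exists, so \(R_k(P) > k - 1\).

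For the upper bound \(R_k(P) = O(k)\), set \(d := \dim_2(P)\) and \(h := h(P) \geq 2\). Iterating the two-color Blob Lemma (Lemma~\ref{blob}) by peeling off one color at a time gives the recursion \(R_k(P) \leq d + h\, R_{k-1}(P)\), which unrolls to an exponential bound in \(k\); thus a direct multicolor argument is needed. My plan is to apply the Blob Lemma once to embed \(P \times Q_m\) into \(Q_N\) with \(N = d + hm\) for some \(m\) that will be chosen linear in \(k\), and then exploit the abundance of copies of \(P\) inside the product. Specifically, for every function \(q\colon P \to Q_m\) the set \(\{(p, q(p)) : p \in P\}\) is an embedded copy of \(P\), because in the lexicographic product the comparability of \((p_1, q(p_1))\) and \((p_2, q(p_2))\) with \(p_1 \neq p_2\) is determined entirely by the \(P\)-coordinate. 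Writing \(N_c(p) := |\{q \in Q_m : (p,q) \text{ has color } c\}|\), the number of \(c\)-monochromatic diagonal copies equals \(\prod_{p \in P} N_c(p)\); hence the absence of a monochromatic \(P\) forces, for each color \(c\), the existence of some row \(\{p\} \times Q_m\) avoiding color \(c\), and pigeonhole yields a row that uses at most \(k - \lceil k/|P| \rceil\) colors, to which induction on the number of colors can be applied.

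To upgrade this from a polynomial to a \emph{linear} bound, I would supplement the product argument with a direct chain pigeonhole. In \(Q_N\) with \(N \geq (h-1)k\), a maximal chain has \(N+1 \geq (h-1)k + 1\) elements, so by pigeonhole some color appears at least \(h\) times, producing a monochromatic chain of length \(h\). The remaining non-chain elements of \(P\) are then located in the same color by invoking the Blob Lemma (or the Antichain Lemma, Lemma~\ref{antichains}) inside the intervals between consecutive chain elements, provided these intervals are large enough; this is arranged by selecting the anchor chain with sufficient spacing.

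The main obstacle will be this ``decoration'' step: I must ensure that the non-chain elements of \(P\) can be found in the same color as the anchor chain using only a constant number of additional dimensions per color, so that the total cost remains \(c(P)\, k\) rather than blowing up multiplicatively. Overcoming this requires a careful interplay between the spacing of the anchor chain and a recursive use of the Blob Lemma on each interval, and this is where I expect the bulk of the technical effort to lie.
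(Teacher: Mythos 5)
Your lower bound is exactly the paper's argument (layered coloring of $Q_{k-1}$ with one color per layer; each color class is an antichain, so it contains no copy of $P$), and it is correct. The problem is the upper bound: what you have written is a plan with an explicitly unresolved obstacle, not a proof. As you yourself note, the iterated Blob Lemma recursion $R_k(P)\le d + h\,R_{k-1}(P)$ is exponential in $k$; your diagonal-counting refinement only improves the recursion to one whose depth is $O(|P|\log k)$ with a multiplicative factor of $h$ per level, which is polynomial in $k$, not linear; and your final chain-pigeonhole route leaves the ``decoration'' step --- placing the non-chain elements of $P$ in the same color as the anchor chain at constant cost per color --- as an admitted open difficulty. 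None of the three routes closes to $O(k)$, so the theorem is not established.

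The ingredient you are missing is a supersaturation-free density bound that is \emph{uniform in $N$}: the Lubell mass $\ell(\cF)=\sum_{S\in\cF}\binom{N}{|S|}^{-1}$ of any $P$-free family $\cF\subseteq 2^{[N]}$ is bounded by a constant $\lambda^*(P)$ depending only on $P$ (this is M\'eroueh's theorem, quoted in Section~\ref{definitions}). With that in hand the upper bound is one line: if a $k$-coloring of $Q_N$ has no monochromatic copy of $P$, then each color class $C_i$ is $P$-free, so
\begin{equation*}
N+1=\ell\bigl(2^{[N]}\bigr)=\sum_{i=1}^{k}\ell(C_i)\le k\,\lambda^*(P),
\end{equation*}
i.e.\ $R_k(P)\le k\,\lambda^*(P)=O(k)$. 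The point is that the Lubell mass is additive over the color classes and totals exactly $N+1$ on all of $Q_N$, so a constant bound per class forces $N=O(k)$ with no recursion at all. Your structural approaches through the Blob Lemma cannot reach this because they pay a multiplicative (rather than additive) price per color.
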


Finally, we make a quick comparison between Ramsey numbers for Boolean lattices and Ramsey numbers for Boolean algebras.
A Boolean algebra  $B_n$  of dimension $n$  is 
 a set system $\{ X_0 \cup  \bigcup_{i\in I} X_i:    I\subseteq [n]\},$
 where $X_0, X_1, \ldots, X_n$ are pairwise disjoint sets, $X_i \neq \emptyset$ for $i=1, \ldots, n$.
 To see the difference between Boolean algebras  and copies of  $Q_n$ consider three families of sets in $2^{[6]}$:  
$ \cF_1 = \{  \{2\}, $   $\{2,3\},$   $  \{2, 4, 5\}, $   $ \{2,3,4, 5, 6\}\}$, 
$ \cF_2 = \{ \{2\}, \{2,3,4\}, \{2, 5\}, $ $ \{2,3, 4, 5\}\}$,
 $\cF_3 = \{ \{2\}, \{2,3\},$$ \{2, 3, 5\}, \{2,3, 4, 5\}\}.$
 Here $\cF_2$ is a Boolean algebra of dimension $2$ with $X_0= \{2\}, X_1= \{3,4\}$, $X_2=\{5\}$,  but $\cF_1$ and $\cF_3$ are not.
The families $\cF_1$ and $\cF_2$ are copies of $Q_2$, however $\cF_3$ is not a copy of $Q_2$ because in $Q_2$ there are two incomparable elements and in $\cF_3$ any two elements are comparable.
 Boolean algebras have  much more restrictive  structure than Boolean lattices. If a subset of $Q_N$ contains a Boolean algebra of dimension $n$, then
 it clearly has a copy of $Q_n$, but not necessarily the other way around. 
 \textcite{Gunderson99} considered the number $R_{\mathsf{Alg}}(n)$, defined to be the smallest $N$ such that any red/blue coloring of subsets of an $N$ element set contains a red or a blue Boolean algebra of dimension $n$.    Note that here “contains” means not a  subposet containment  but simply a subset containment in  $2^{[N]}$. The existence of $R_{\mathsf{Alg}}(n)$ and thus of  $R_{\dim_2}(Q_n, Q_n)$  easily follows from Lemma \ref{layer}. However, the bounds   given by this approach are very large.
We state the bounds on $R_{\mathsf{Alg}}(n)$,  here  the lower bound is given without a proof by  \textcite{Brown-Erdos} and the upper bound is a recapturing of the arguments given by  \textcite{Gunderson99}. Here,  $K^n( s, \ldots, s)$ is  a complete $n$-uniform $n$-partite hypergraph with parts of size $s$ each and 
 $R_h(K^n(2, \ldots, 2))$ is  be the smallest $N'$ such that any $2$-coloring of  $K^n(N', \ldots, N')$ 
contains a monochromatic  $K^n(2, \ldots, 2)$. Here $h$ stands for  hypergraph.

\begin{theorem} \label{Algebra}
There is a positive constant $c$ such that
$$2^{cn} \leq   R_{\mathsf{Alg}}(n)\leq \min\{ 2^{2^{n+1} n \log n},  n R_h(K^n(2, \ldots, 2))\}.$$ 
\end{theorem}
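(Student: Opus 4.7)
The theorem decomposes into three independent bounds, which I would prove separately.

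\emph{Lower bound.} I would use the Brown--Erd\H{o}s random construction. A Boolean algebra of dimension $n$ in $2^{[N]}$ is determined by an ordered tuple $(X_0,X_1,\ldots,X_n)$ of pairwise disjoint subsets with $X_i\neq\emptyset$ for $i\geq 1$, and there are at most $(n+2)^N$ such tuples (assign each element of $[N]$ to one of $n+2$ classes). Coloring $2^{[N]}$ uniformly at random, a fixed BA is monochromatic with probability $2^{1-2^n}$. The expected number of monochromatic BAs is at most $(n+2)^N\cdot 2^{1-2^n}$, which is less than $1$ for $N=\Theta(2^n/\log n)$, yielding $R_{\mathsf{Alg}}(n)\geq 2^n/(2\log_2(n+2))\geq 2^{cn}$ for a suitable $c>0$.

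\emph{Upper bound via hypergraph Ramsey.} Let $r=R_h(K^n(2,\ldots,2))$ and set $N=nr$. Partition $[N]$ into $n$ linearly ordered blocks $V_1,\ldots,V_n$ of size $r$. I would color each edge $(v_1,\ldots,v_n)$ of the complete $n$-partite $n$-uniform hypergraph on $V_1\cup\cdots\cup V_n$ by the color in $2^{[N]}$ of the downward-closed set $T(v_1,\ldots,v_n)=\bigcup_{i=1}^n\{v\in V_i:v\leq v_i\}$. By the definition of $r$ there exist pairs $\{a_i<b_i\}\subseteq V_i$ whose induced $K^n(2,\ldots,2)$ is monochromatic. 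Setting $X_0=\bigcup_i\{v\in V_i:v\leq a_i\}$ and $X_i=\{v\in V_i:a_i<v\leq b_i\}$ produces pairwise disjoint nonempty $X_i$ for $i\geq 1$; moreover each candidate BA element $X_0\cup\bigcup_{i\in I}X_i$ equals $T(z_1,\ldots,z_n)$ where $z_i=b_i$ iff $i\in I$, and is therefore monochromatic.

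\emph{Upper bound via the Layered Lemma.} I would apply Lemma~\ref{layer} with parameter $M=h(n,2)$, where $h(n,2)$ denotes the smallest $K$ such that every 2-coloring of $\{0,1,\ldots,K\}$ contains a monochromatic Hilbert cube $\{s_0+\sum_{i\in I}s_i:I\subseteq[n]\}$ with $s_i\geq 1$ for $i\geq 1$. The lemma gives $S\subseteq[N]$ with $|S|=M$ on which the coloring is layered, so the colors depend only on subset size and descend to a 2-coloring of $\{0,1,\ldots,M\}$. By choice of $M$ there is a monochromatic Hilbert cube in this induced coloring; selecting pairwise disjoint subsets $X_0,X_1,\ldots,X_n$ of $S$ of the corresponding sizes (possible since $\sum_i s_i\leq M$) yields a monochromatic BA of dimension $n$. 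Substituting the explicit bound for Lemma~\ref{layer} (obtained by iterating hypergraph Ramsey) and the known bound on $h(n,2)$ yields the stated estimate $2^{2^{n+1}n\log n}$.

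The main obstacle is not any structural step---all three arguments are clean---but the quantitative bookkeeping in the Layered-Lemma upper bound: the bound for Lemma~\ref{layer} is itself tower-type, and threading it through the Hilbert-cube Ramsey estimate so as to land on the specific exponent $2^{n+1}n\log n$ requires careful computation. In contrast, the hypergraph-Ramsey route hides its difficulty entirely inside the unresolved function $R_h(K^n(2,\ldots,2))$.
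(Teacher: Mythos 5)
Your lower bound and your $nR_h(K^n(2,\ldots,2))$ bound are both sound. The hypergraph-Ramsey argument is essentially the paper's: it takes a longest chain $\cC_i$ in $2^{N_i}$ for each of $n$ blocks $N_i$ and colors the hyperedge $\{X_1,\ldots,X_n\}$ by $c(X_1\cup\cdots\cup X_n)$, which is exactly your down-set construction $T(v_1,\ldots,v_n)$ in different notation. Your lower bound, however, takes a genuinely different route: the paper exhibits a \emph{layered} coloring whose layer indices avoid a monochromatic Hilbert cube of dimension $n$ and then quotes the Brown--Erd\H{o}s bound $h(n,2)\ge 2^{cn}$, whereas your first-moment computation over random colorings is self-contained, avoids citing the Hilbert-cube density result, and gives a comparable $2^{cn}$ bound (indeed with $c$ arbitrarily close to $1$ for large $n$). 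That substitution is perfectly acceptable.

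The genuine gap is in the first upper bound. You propose to derive $2^{2^{n+1}n\log n}$ from the Layered Lemma combined with Hilbert's cube lemma, and you flag the ``quantitative bookkeeping'' as the remaining work --- but no amount of bookkeeping will make this route land on that exponent. To produce a layered sublattice of dimension $M=h(n,2)$, the Layered Lemma needs $N=R^1(R^2(\cdots R^{M-1}(M)\cdots))$, an iteration of hypergraph Ramsey numbers of every uniformity up to $M-1$; since $R^k$ is tower-type of height roughly $k$ and $M\ge 2^{cn}$, this $N$ is a tower of height exponential in $n$, vastly larger than the doubly exponential $2^{2^{n+1}n\log n}$. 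The paper makes this point explicitly (``the bounds given by this approach are very large'') and proves the stated bound by a completely different, density argument: it invokes the Tur\'an-type estimate of Gunderson, R\"odl and Sidorenko, $b(N,n)\le 10^n 2^{-1/2^{n-1}} n^{n-1/2^n} N^{-1/2^n}2^N$, where $b(N,n)$ is the largest size of a family in $2^{[N]}$ containing no Boolean algebra of dimension $n$; for $N=2^{2^{n+1}n\log n}$ the majority color class has more than $b(N,n)$ sets and therefore contains a monochromatic Boolean algebra. You need this (or some other non-Ramsey-theoretic) ingredient to reach the claimed bound.
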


We put the details of the  upper bound   in Section \ref{boolean-algebra}.  The last section we devote to conclusions.


\section{ Definitions and proofs of lemmas}\label{definitions}

For a subset  $U$  of elements in  a poset $P$, the {\it upper set} of $U$ or simply the {\it upset}~of $U$,  is $U^+ = \{ x \in P\mid ~  \exists u\in U\colon   u\leq x\}$. If $U = U^+$, then $U$ is called upper closed.
 We say that a sequence of sets is {\it  inclusion respecting} if any higher indexed set is not   a subset of  a lower indexed set.
 Note that one can always create such an  ordering for any family of sets by putting the sets of smaller cardinality ahead of 
 sets of larger cardinality.

An {\it affine cube}, or a {\it Hilbert cube} of dimension $n$  is a set of integers $\{ x_0 +  \sum_{i\in I} x_i:    I\subseteq [n]\},$
 where $x_0 $ is a  non-negative integer and $x_1, \ldots, x_n$ are  positive integers.
 Note that if  $\mathcal A = \{ X_0 \cup \bigcup_{i\in I} X_i:    I\subseteq [n]\}$ is a Boolean algebra and $x_i=|X_i|$, $i=0, \ldots, n$, 
 then the set of all sizes of sets occurring in $\A$ is equal to  $\{ |X| : X \in \mathcal A\} = \{ x_0 +  \sum_{i\in I} x_i:    I\subseteq [n]\}$ and therefore is a Hilbert cube.  Hilbert \cite{Hilbert} showed that for  any positive integers $c$ and $n$  there is $N=h(n,c)$ such that any $c$-coloring of $[N]$ contains a monochromatic Hilbert cube of dimension $n$. 
Let $b(N,n)$ be  the largest size of a family of subsets of $[N]$ that does not contain 
  a Boolean algebra of dimension $n$. 
  
The {\it Lubell mass}  $\ell$ of a set family $\cF\subseteq 2^{[N]}$  is 
$\ell(\cF) = \sum_{S\in \cF}   \binom{N}{|S|} ^{-1}$. Note that $\ell(2^{[N]}) = N+1$.
For a poset $P$, let $\lambda^*(P) = \lim \sup _{N\rightarrow \infty} \{\ell(\cF): ~ \cF\subseteq 2^{[N]} \mbox{  contains no copy of } P\}$. 
M\'eroueh  \cite{Meroueh15} proved that $\lambda^*(P)$ exists for any $P$.

For other definitions on graphs, posets, and set systems, we refer the reader to West and Trotter  \cite{west_introduction_2000,Trotter86}.

\begin{proof}[Proof of Layered Lemma]
 Let $R^{k}(n)$ be the hypergraph Ramsey number that is equal to the smallest integer $N$ such that 
 any red/blue coloring of $\binom{[N]}{k}$  contains a monochromatic complete hypergraph $\binom{S}{k}$ for $|S|=n$.
Let $c$ be a red/blue coloring of $2^{[N]}$ for $N= R^1(R^2(R^3(\cdots   (R^{n-1}(n)) \cdots )))$.
We shall prove by induction on $i$, $i=1, \ldots, n-1$ that there is a set $S_i\subseteq [N]$ such that $\binom{S_i}{j}$ is monochromatic for each $j = 0, \ldots, i$, 
$|S_i|= R^{i+1}(R^{i+2}(\cdots   (R^{n-1}(n)) \cdots )))$, and $S_i\supseteq  S_{i+1}$.
Since $N=  R^1(R^2(R^3(\cdots   (R^{n-1}(n)) \cdots )))$,    a coloring  $c$ restricted to  $1$-element sets  contains a monochromatic family
of size  $R^2(R^3(\cdots   (R^{n-1}(n)) \cdots ))$. Let this set be $S_1$.
Assume that $S_1, \ldots, S_{i-1}$ satisfying the assumption exist. Since $|S_{i-1}| = R^{i}(R^{i+1}(\cdots   (R^{n-1}(n)) \cdots ))$,
  a coloring $c$ restricted to subsets of size $i$ in $S_{i-1}$ contains  a set $S_i\subseteq S_{i-1}$ all of whose $i$-element subsets are of the same color and such that 
$|S_i| = R^{i+1}(R^{i+2}(\cdots   (R^{n-1}(n)) \cdots ))$.  This completes the induction. We have that $|S_{n-1}|=  R^n(n) = n$, so  
 $2^{S_{n-1}}$ is a Boolean lattice of dimension $n$ with  a layered coloring.
\end{proof}

 \begin{proof}[Proof of Blob Lemma]
 Let $n= dim_2(P)$ and let $f$ be an embedding of $P$ into $2^{[n]}$.
 Let  $h(P)=h$ and $N = n+ hm$. Consider a partition of $[N]  = X_0 \cup X_1 \cup \ldots \cup X_h$, where 
 $X_0 = [n]$ and $|X_i| = m$, $i=1, \ldots, h$.  Let $f_i$ be an isomorphism from  $2^{[m]}$ to $2^{X_i}$, $i=1, \ldots, h$.
 We construct an embedding $g$ of $P\times 2^{[m]}$ into $2^{[N]}$. For  $p \in P$ and $S\subseteq [m]$,
 let $h(p)$ be the number of elements in a  longest chain in $P$ with maximum element $p$ and let 
$g((p, S)) = f(p) \cup X_1\cup \ldots \cup X_{h(p)-1}  \cup f_{h(p)}(S) $, where   $h(p)\geq 1$. 
 
 By definition of the product, we have that $(p,S)$ is less than $(p', S')$ in $P\times 2^{[m]}$ iff $p$ is less than $p'$ in $P$  or ( $p=p'$  and $S\subseteq S'$) and it is straightforward to check that $g$ respects this relation and is therefore an embedding.
 
 For the second claim, consider a coloring of $2^{[N]}$ in red and blue.  Consider further a copy of $P\times 2^{[m]}$ in it. 
 If some copy of $2^{[m]}$ is blue, we are done, otherwise there is a red element $(p,S_p)$ for each $p\in P$ and some $S_p\subseteq [m]$.
 These red elements form a red copy of $P$.
 \end{proof}

 \begin{proof}[Proof of Antichain Lemma]
 
 Consider  $\cX_0 =  2^{\{\ell+1, \ldots, N\}}$. Let $n= N-\ell$.
 We shall show a stronger statement by induction on $i$, $i=0, \ldots, \ell$ :   there is a copy  $\cX_i$  of $2^{[n]}$ with sets from $\{1, 2, \ldots, i,  \ell+1, \ldots, N\}$   such that 
 this copy does not contain any set from $\cA_1, \ldots, \cA_i$.
 
 Basis: The claim is trivial for $i = 0$ since $\cX_0$ fulfills the requirement as no sets are forbidden yet.

 Step:  Assume that $\cX_{i-1}$ is a copy of $2^{[n]}$ consisting of subsets of $\{1, 2, \ldots, i-1,  \ell+1, \ldots, N\}$  and 
 avoiding all of $\cA_1, \cA_2, \ldots, \cA_{i-1}$, $i\geq 1$.    Let $\cX_i$ be the image of an embedding $f$  of $\cX_{i-1}$, where 
 $f(Y)= Y$ if $Y\not \in \cA_i^+$, $f(Y)= Y\cup \{i\}$ if $Y\in \cA_i^+$.  As before it is an  embedding as it preserves inclusions.
  We only need to check that $f(Y) \not \in \cA_1, \cA_2, \ldots, \cA_i$ for any $Y\in \cX_{i-1}$.
 Assume that $f(Y) \in \cA_j$ for some $j\in \{1, \ldots, i\}$.  
 Since any $Y$ from $\cX_{i-1}$ is not in $\cA_1, \ldots, \cA_{i-1}$ and the elements of $\cA_i$ in $\cX_{i-1}$ were shifted up by $\{i\}$, we see that 
 if $f(Y)\in \cA_j$, then  $Y\in \cA_i^+$, and  so $f(Y) = Y\cup \{i\}$. Thus, there is $B\in \cA_i$, such that $B\subseteq Y$. So, $B\subseteq Y\subseteq Y\cup\{i\} = C$ for some 
 $C\in \cA_j$. If $i=j$ then we have two comparable sets $B$ and $C$ in an antichain $\cA_i$, a contradiction. 
So,  $j< i$. But $\cA_j$ consists of minimal elements of $\cA_j\cup \cdots \cup \cA_\ell$ and  $C\in \cA_j$ is not a minimal element, a contradiction.
\end{proof}
 
 \section{Ramsey numbers for Boolean lattices} \label{cube}

    \begin{proof}  [Proof of Theorem \ref{thm:cube} (i)]
   For the lower bound on $R(Q_n,Q_n)$, consider  $Q_{2n-1}$.  Color the sets of sizes   $0, \ldots, n-1$ red and all other 
   sets  blue. Then there is no monochromatic chain with $n+1$ elements and thus there is no monochromatic copy of $Q_n$.
    
   For the upper bound on $R(Q_n, Q_n)$, consider  a red/blue coloring of $Q_{n^2+2n}$.
   Let  the ground set be  $ X_0 \cup X_1  \cup \ldots \cup X_{n+1}$, where $X_i$'s are pairwise disjoint and of size $n$ each.
   Consider families of sets $\cB_Y$ for each  $Y\subseteq X_0$ with $|Y|\geq 1$  to be 
  $\cB_Y = \{ Y \cup X_1 \cup \ldots \cup X_{|Y|} \cup X:~  X\subseteq X_{|Y|+1}\}$, 
  let $\cB_\emptyset  =2^ {X_1}$. 
    We see that each $\cB_Y$ is  a copy of $Q_n$.  If   this copy is blue, then $\cB_Y$ gives a monochromatic 
    copy of $Q_n$. Otherwise,  there is a red element in each $\cB_Y$. This element is $Z_Y=  Y \cup X_1 \cup \ldots \cup X_{|Y|} \cup S_Y$,  where $S_Y\subseteq X_{|Y|+1}$.
    We claim that these elements form a red copy of $Q_n$. Indeed, we see for $Y, Y' \subseteq [n]$ that 
    $Y\subseteq Y'$ iff $Z_Y\subseteq Z_{Y'}$.   
    \end{proof}

     \begin{proof}  [Proof of Theorem \ref{thm:cube} (ii)]
    To show that $R(Q_1, Q_n) = n+1$,  consider a red/blue coloring of $Q_{n+1}$. 
    If there are two red sets $A\subseteq B$, then they form a red copy of $Q_1$ and we are done. 
    So, we can assume that the red sets form an antichain, $\cA$.
    From Lemma \ref{antichains} we see that there is a copy of $Q_n$ avoiding $\cA$.  
    To see that $R(Q_1, Q_n)>n$, color one element of $Q_n$ red and other blue.
    \end{proof}

     \begin{proof}  [Proof of Theorem \ref{thm:cube} (iii)]
     To show that $R(Q_2, Q_n) \leq 2n+2$,  consider a red/blue coloring of $Q_{2n+2}$. 
    Consider the poset formed by  red elements. Assume that it has height at least  $n+3$. Then there are two sets $A\subseteq B$, $|B\setminus A|\geq n+2$.
    Let $\cX=\{ Y:    A\subseteq Y\subseteq B\}$. Then $\cX$ is a  copy of $Q_m$, $m\geq n+2$,  with maximal and minimal elements red.
    This implies that all red elements in this poset  form a chain, otherwise there is a red  copy of $Q_2$. 
   Let  $a$ be contained in all sets of this chain but not in $A$.  Let $b\in B\setminus (A\cup \{a\})$.
  Consider $\cX' = \{ Y\in \cX:  a\not \in Y,  b\in Y\}$.  
    Then $\cX'$ is a blue copy of $Q_{m-2}$ containing a blue copy of $Q_n$. 
    Thus, we can assume that the height $h$ of the red poset $R$  is at most $n+2$. Build an antichain $\cA_1$ to be the set of minimal elements of $R$ and 
    an antichain $\cA_i$, $i>1$ to be the set of minimal elements of $R\setminus (\cA_1\cup \cdots \cup \cA_{i-1})$,  $i=1, \ldots, h$.
    Applying Lemma \ref{antichains}, we see that  there is a blue copy of $Q_{2n+2-(n+2)}= Q_n$.
    \end{proof}

       \begin{proof}  [Proof of Theorem \ref{thm:cube} (iv)]
    The bound on $R(Q_n,Q_m)$ is a corollary of the Blob Lemma.
    Note  that our upper bound on  $R(Q_n, Q_n)$  also follows from the Blob Lemma, but we wrote an explicit direct proof.
    \end{proof}
    
     \begin{proof}[Proof of Theorem \ref{thm:cube} (v)]
     To prove that $R(Q_2, Q_2) \geq 4$, consider a layered coloring of $Q_3$ with two red layers and two blue layers. This coloring has no monochromatic copy of $Q_2$. 
     For the upper bound, consider a red/blue coloring of $2^{[4]}$ and treat two cases: when $\emptyset$ and $[4]$ have the same or different colors. 
     Relatively easy case analysis shows that in both cases there is a monochromatic copy of $Q_2$.
     
    To prove that $R(Q_3, Q_3)\in \{7,8\}$ we first give an explicit coloring of $2^{[6]}$ containing no monochromatic copy of $Q_3$. 
    We list the family $\cR$ of  red sets in Figure \ref{fig:q6withoutq3}. All other sets are blue, denote their family by $\cB$.
    Note that a set $S$ is  in $\cR$ iff  $[6]\setminus S$ is  in $\cB$. So, it is sufficient to verify that $\cR$ contains no copy of $Q_3$.

    \begin{figure}[htb]
        \centering
        \includegraphics{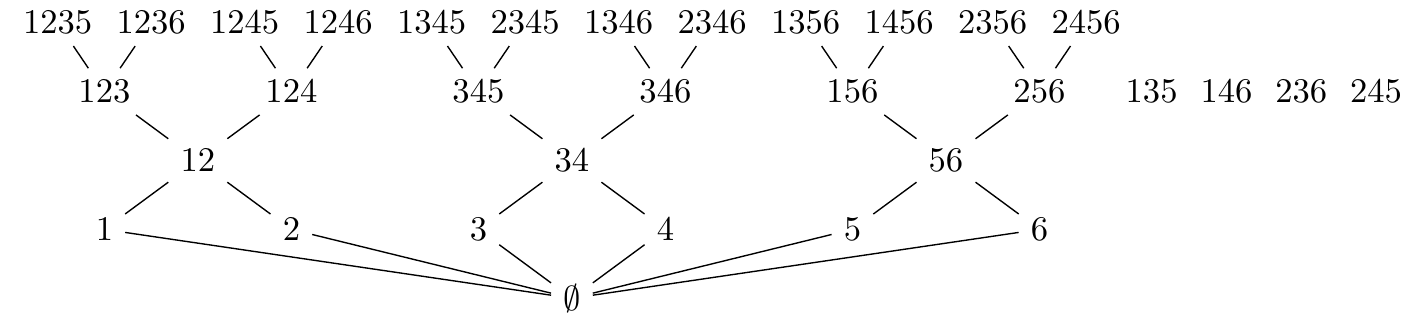}
     \caption{The family $\cR$ of red subsets of $[6]$ that  avoid a  copy of~$Q_3$. Braces and commas are omitted. The lines illustrate inclusion, but inclusions involving the four sets on the right are ommitted to keep the picture simple.}    \label{fig:q6withoutq3}
    \end{figure}

   Assume for the contrary that $f: 2^{[3]}\rightarrow \cR$ is an embedding. Let $Y=f([3])$.    Clearly, $Y$ should have size $3$ or $4$.  
We assume that $|Y|=4$, since replacing  $Y$  with one of its superset in $\cR$ still 
 gives a copy of $Q_3$.  Without loss of generality $Y=\{1,2,3,5\}$. 
Then $2^Y\cap \cR = \{\{1,2, 3, 5\}, \{1,2, 3\}, $ $ \{1,3, 5\},$ $  \{1,2\},  \{1\},$ $ \{2\}, \{3\}, \{5\}, \emptyset \}$, that clearly does not contain a copy of $Q_3$.
 
    
    To show that $R(Q_3, Q_3)\leq 8$, consider a red/blue coloring of $2^{[8]}$. 
    We say that a family of five distinct sets  $A_1, \ldots, A_5$   forms a configuration $\cA$ if either $A_1, \ldots, A_5$ or  their complements   satisfy the following conditions:
     $A_1\subset  A_2, A_3, A_4\subset A_5$,  $|A_5\setminus A_i| \geq 5$  for $i= 2, 3, 4$, $|A_2|=|A_3|=|A_4|$,  and $|A_i\cup A_j| = |A_i|+1$, $2\leq i< j\leq 4$. 
    We treat four  cases and  in each of them we find either a monochromatic copy of $Q_3$ or a monochromatic family $\cA$.   
    
     Case 1. The elements  of some four  layers have the same color.  Then there is a monochromatic copy of $Q_3$. 
    
    Case 2.  The elements of  some three consecutive  layers are of the same color, say red, and Case 1 does not hold.  
    If  there is  a red element above or below the three  red layers,  then this element and some $7$ members of the three red layers form a red copy of $Q_3$.
    If there is no such element, then there are $6$ blue layers, that brings us to Case 1.

    Case 3.  The elements of the first two or the last two layers have the same color, say  the first two layers are red and Cases 1 and 2 do not hold. 
 Since last three layers contain both red and blue elements, let $M$ be a set of size at least $6$ that is red. If $a, b, c \in M$, then $\emptyset, \{a\}, \{b\}, \{c\}, M$ form a monochromatic configuration $\cA$.
 
 Case 4.  None of the Cases 1, 2, 3 hold.   Let $\emptyset$   be red. Then, w.l.o.g. $\{1\}$ is blue and by pigeonhole principle $\{1,2\}, \{1,3\}, \{1,4\}, \{1,5\}$ are all of the same color  $t\in \{$red, blue$\}$.
 If either $[8]$ or $[8]\setminus \{i\}$ is of color $t$, $i=2, 3, 4, 5$, then we have a monochromatic configuration $\cA$. Otherwise $[8],  [8]\setminus \{2\}, [8]\setminus \{3\}, [8]\setminus \{4\}, B$, where $B\in \{\emptyset, \{1\}\}$ form a 
 monochromatic configuration $\cA$. 
 
 It remains to show that if there is a monochromatic configuration $\cA =\{A_1, A_2, A_3, A_4, A_5\}$ then there is a monochromatic copy of $Q_3$. 
 Assume w.l.o.g. that $|A_5\setminus A_i| \geq 5$ for $i=2, 3, 4$.
 Further assume that $A_i = A\cup \{i\}$, for $i=2, 3, 4$ and some $A$.   Consider $B_k =\{ S:    A\cup \{i, j\}  \subseteq S\subseteq A_5 \setminus \{k\} \}$ for $\{i,j,k\} = \{2,3,4\}$.
 Then each $B_k$, $k=2, 3, 4$ is a copy of $Q_3$. If at least one of this copies is monochromatic, we are done. Otherwise each $B_k$ has a red element, $C_k$. 
 Then $A_1, A_2, A_3, A_4, C_1, C_2, C_3, A_5$ form a monochromatic copy of $Q_3$.
 \end{proof}

 \begin{proof}[Proof of Theorem \ref{thm:cube} (vi)]
 Here, we do not try to optimize the constant $3$. In fact, it could be replaced with $1+ \epsilon$ for any positive $\epsilon$.
   Let $N= n+ (n+1)2 \log n $. Let $[N] = X_0\cup X_1\cup \cdots \cup X_{n+1}$, where $X_i$'s are pairwise disjoint, $|X_0|=n$ and 
 $|X_i| = m$, $i=1, \ldots n+1$, where $m= 2 \log n$.  We drop floors and ceilings  here and assume that $m$ is even.
 Color subsets of $[N]$ with red and blue such that a set gets color red with probability $1/2$ and the sets are colored independently.
 For a set $S\subseteq X_0$, let $\cF(S) = \{ S\cup X_1 \cup \cdots \cup X_{|S|} \cup X:   X\subset X_{|S|+1}, |X| = m/2\}$.
 The probability that $\cF(S)$ has only blue sets is $2^{- \binom{m}{m/2}}$. 
 The probability that each $\cF(S)$ has a red set for each $S$ is thus
 at least $p(n) = 1 -  2^n 2^{- \binom{m}{m/2}}$. Here, we use a simple union bound and the fact that there are $2^n$ choices for $S$.
 Since $2^n2^{-\binom{m}{m/2}} = 2 ^{n - \binom{m}{m/2}} \leq  2^{n - n - \log n } \xrightarrow[n\to\infty]{}  0$, we have that 
   $p(n) \xrightarrow[n\to  \infty] {} 1$. Therefore, asymptotically almost surely each $\cF(S)$ has a red set $F_S$. 
 In that case $\{F_S:  ~S\subseteq X_0\}$ is a copy of $Q_n$.
 \end{proof}


 \section{\texorpdfstring{Embeddings of $Q_n$ into $Q_N$.}{Embeddings of Qn into QN.}} \label{enumerate}

In this section we define two bijections between the set of copies of $Q_n$ in $Q_N$ and sets of other combinatorial objects. We use these bijections to prove Theorem \ref{enumerate} and to make 
some general observations about Ramsey numbers for Boolean lattices.

\subsection{Bijection from the set of good sequences}

 Let $f: 2^{[n]} \rightarrow 2^{[N]}$ be an embedding.
 Consider the sequence of sets $(U_1(f), \ldots, U_N(f))$ with $$U_j (f)= \{S\in 2^{[n]}:  j\in f(S)\}, \quad j \in [N].$$
 We call this the \emph{characteristic vector} of the embedding $f$. An example is shown in Figure \ref{fig:fibres}. 
 We call a sequence of upper-closed sets from $2^{[n]}$ {\it good}  if  each of $(\{i\})^+$, $i=1, \ldots, n$ appear in that sequence.
 Note that some $U_j$s could be empty.
\begin{figure}[htb]
    \centering
    \includegraphics{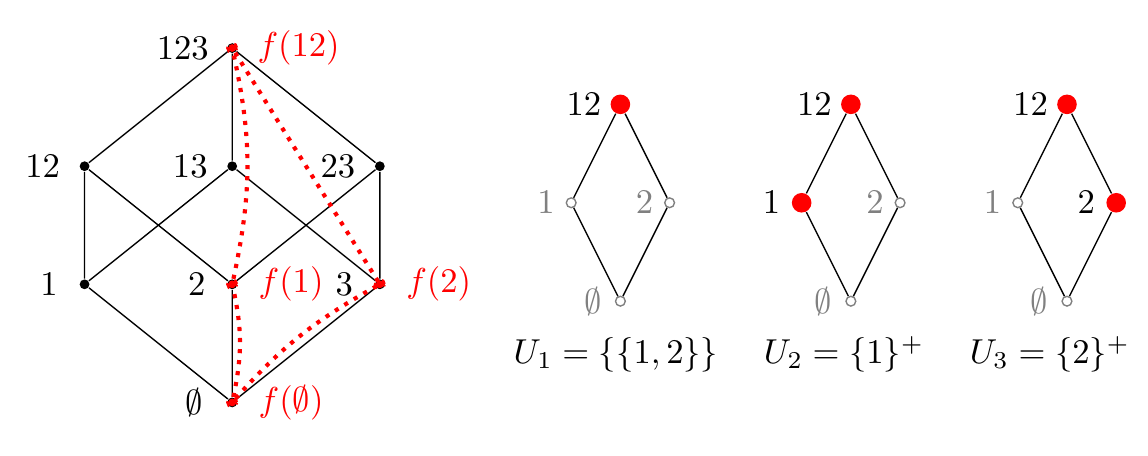}
    \caption{Example for an embedding $f$ of $Q_2$ into $Q_3$ with corresponding characteristic vector $(U_1,U_2,U_3)$.}
    \label{fig:fibres}
\end{figure}
If, for example $U_3 = \{1\}^+$,  $U_7= \{3\}^+$, and $U_8= \{2\}^+$ in an embedding $f$ from $2^{[3]} $ to $2^{[9]}$, 
then any subset of $\{3, 7, 8\}$ is equal to    $f(S) \cap \{3, 7, 8\}$, for some $S$.   For example $\{7, 8\}  \subseteq f(\{3, 2\})$ since  $3 \not\in f(\{3, 2\})$,  it follows that 
$f(\{3, 2\} )\cap \{3, 7, 8\} = \{7, 8\}$.

  \begin{theorem}\label{fibers}
  There is a bijection between the set of embeddings of $2^{[n]}$ into  $2^{[N]}$ and the set of good sequences of  $N$ upper-closed  sets from $2^{[n]}$.
  The bijective mapping assigns each such embedding its characteristic vector. 
   \end{theorem}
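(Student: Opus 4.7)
The plan is to split the bijection into a well-defined forward map, an explicit candidate inverse, and a mutual-inverse check, treating ``good'' as exactly the compatibility condition that makes the inverse respect the order in both directions.

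First I would verify that the map $f \mapsto (U_1(f), \ldots, U_N(f))$ really lands in the target set. Each $U_j(f)$ is upper-closed: if $S \in U_j(f)$ and $S \subseteq T$, then $f(S) \subseteq f(T)$ by order-preservation, so $j \in f(S) \subseteq f(T)$, giving $T \in U_j(f)$. The less obvious part is that every $\{i\}^+$ appears. For a fixed $i \in [n]$, since $\{i\} \not\subseteq [n]\setminus\{i\}$ and $f$ reflects the order, $f(\{i\}) \not\subseteq f([n]\setminus\{i\})$, so I can pick $j \in f(\{i\}) \setminus f([n]\setminus\{i\})$. I then claim $U_j(f) = \{i\}^+$. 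If $i \in S$ then $\{i\} \subseteq S$ gives $j \in f(\{i\}) \subseteq f(S)$, so $S \in U_j(f)$; conversely if $i \notin S$ then $S \subseteq [n]\setminus\{i\}$ gives $f(S) \subseteq f([n]\setminus\{i\})$, so $j \notin f(S)$. This is the key computation.

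Next I would define the candidate inverse: given a good sequence $(U_1, \ldots, U_N)$ of upper-closed subsets of $2^{[n]}$, set $g(S) = \{j \in [N] : S \in U_j\}$. Order-preservation is immediate: if $S \subseteq T$, then $S \in U_j$ forces $T \in U_j$ by upper-closure, so $g(S) \subseteq g(T)$. For order-reflection, suppose $S \not\subseteq T$ and pick $i \in S \setminus T$. By goodness, some $U_j$ equals $\{i\}^+$, and then $S \in U_j$ while $T \notin U_j$, so $j \in g(S) \setminus g(T)$, in particular $g(S) \not\subseteq g(T)$. Injectivity of $g$ follows from order-reflection, so $g$ is an embedding.

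Finally, the two constructions are plainly inverse to each other: starting from $f$, we have $j \in f(S) \iff S \in U_j(f)$ by definition of $U_j(f)$, so reconstructing via $g$ gives back $f$; starting from a good sequence $(U_1, \ldots, U_N)$ and forming $g$, the set $U_j(g) = \{S : j \in g(S)\}$ is exactly $\{S : S \in U_j\} = U_j$. The only step that really requires thought is the separation argument linking ``each $\{i\}^+$ appears'' with order-reflection; once this equivalence is extracted, the rest is bookkeeping, and this same observation is what I expect to be the main obstacle to write cleanly.
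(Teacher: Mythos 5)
Your proposal is correct and follows essentially the same route as the paper: the same argument that each $U_j(f)$ is upper-closed and that picking $j \in f(\{i\}) \setminus f([n]\setminus\{i\})$ yields $U_j(f) = \{i\}^+$, and the same inverse map $g(S) = \{j : S \in U_j\}$. The only cosmetic difference is that you verify bijectivity by checking the two constructions are mutual inverses, whereas the paper proves injectivity of the forward map separately as its Claim 3; these are equivalent bookkeeping.
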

  
  \begin{proof}
 
{\it  Claim 1~~}  Let $(U_1, \ldots, U_N)$ be the characteristic vector of an embedding  $f: 2^{[n]} \rightarrow 2^{[N]}$.  Then for each $j=1, \ldots,N$, the set $U_j$ is upper closed and for each $i=1,\ldots,n$ there is a $U_j$ such that $U_j= \{i\}^+.$\\

 To see that each $U_j$ is upper closed, consider $S\in U_j$ and $S'\supseteq S$. 
 Then $j\in f(S)$, and $f(S) \subseteq f(S')$. Thus $j\in f(S')$, and so $S'\in U_j$. To prove the second statement, note that 
for any $ i\in [n]$ we have $\{i\} \not\subseteq [n]\setminus \{i\}$. Thus  $f(\{i\})\not\subseteq f([n]\setminus \{i\})$, implying that there is  $ j $ such that  $ j\in f(\{i\} )\setminus f([n]\setminus \{i\}).$
Now consider any $S\subseteq [n]$.  If $i\in S$, then $\{i\} \subseteq S$, so $j\in f(\{i\})\subseteq f(S)$.
 If $i\not \in S$, then $S \subseteq [n] \setminus \{i\}$, and because of 
 $f(S) \subseteq f([n] \setminus \{i\})$ and $j \notin f([n] \setminus \{i\})$ we conclude $j \notin f(S)$.
 Thus $ i\in S \Leftrightarrow  j\in f(S) $. By definition we also have $j\in f(S) \Leftrightarrow S\in U_j$ and $i\in S \Leftrightarrow S\in \{i\}^+$.
Thus $S\in  \{i\}^+ \Leftrightarrow S\in U_j$. This implies that   $U_j=\{i\}^+$, proving Claim 1.\\

{\it Claim 2 ~~}
If  $(U_1, \ldots, U_N)$ is a sequence of  upper closed sets   and 
 all $\{i\}^+$ occur among the $U_j$s, for $i=1, \ldots, n$, then the function $f$ with $f(S) =  \{j:  S\in U_j\}$ is an embedding of $Q_n$ into $Q_N$.
 Moreover, $(U_1, \ldots, U_N)$ is the characteristic vector of $f$.\\

First we verify that $f$ is indeed an embedding. 
 If $S\subseteq T$ then since $U_j$'s are upper closed sets,  if $S\in U_j$ then $T\in U_j$, so
 $f(S)\subseteq f(T)$. On the other hand, if $S\not \subseteq T$ then there is $i\in S\setminus T$.
 Let $j$ be an index such that $\{i\}^+ = U_j$. Then  $j \in f(S) \setminus  f(T)$ and thus $f(S) \not\subseteq f(T)$. 
 Now, to check that $(U_1, \ldots, U_N)$ is the characteristic vector of $f$, recall 
 that $U_j(f) =  \{S\subseteq [n] :  j\in f(S)\}$. On the other hand $f(S) = \{j:  S\in U_j\}$, so $U_j=\{ S\subseteq [n]: j \in f(S)\}$.
 Thus $U_j=U_j(f)$, for all $j \in [N]$.  This proves Claim 2.\\

{\it  Claim 3~~}
 If $f$ and $g$ are distinct embedding of $2^{[n]}$ into $2^{[N]}$ then they have distinct characteristic vectors, i.e., 
 $(U_1(f),  \ldots, U_N(f)) \neq (U_1(g), \ldots, U_N(g))$. \\

 Since $f\neq g$ we have $S\subseteq [n]$ with $f(S)\neq g(S)$, i.e., there is, without loss of generality $i\in f(S)\setminus g(S)$.
 Thus $S\in U_i(f)$, but $S\not\in U_i(g)$, which proves Claim 3.\\
 
 Claims  1, 2, and 3 show that the characteristic vector provides the desired bijection.
 \end{proof}

   \subsection{ Bijection from  the set of unions using inclusion preserving map}

   \begin{theorem}
   A set $\cS\subseteq 2^{[N]} $ forms  a copy of $2^{[n]}$ if and only if 
   there is $I \subseteq   [N]$ of size $n$ and an  inclusion preserving map $\phi:  2^{I} \rightarrow 2^{[N]\setminus I}$
   such that $\cS = \{Y\cup \phi(Y):   Y\subseteq I\}$. 
     \end{theorem}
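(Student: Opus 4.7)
The plan is to split this into the two implications; the ``only if'' direction is the substantive one and will be reduced to the characteristic-vector bijection of Theorem~\ref{fibers}.

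For the ``if'' direction, suppose $I\subseteq[N]$ with $|I|=n$ and $\phi\colon 2^I\to 2^{[N]\setminus I}$ is inclusion-preserving. I define $g\colon 2^I\to 2^{[N]}$ by $g(Y)=Y\cup\phi(Y)$ and check that $g$ is an embedding with image $\cS$. Monotonicity of $g$ is immediate from monotonicity of $\phi$. For the converse direction, if $g(Y_1)\subseteq g(Y_2)$, intersecting both sides with $I$ gives $Y_1\subseteq Y_2$, since $\phi$ takes values in $2^{[N]\setminus I}$. Hence $g$ is an embedding of $2^I\cong 2^{[n]}$ and $\cS=g(2^I)$ is a copy of $2^{[n]}$.

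For the ``only if'' direction, let $f\colon 2^{[n]}\to 2^{[N]}$ be an embedding with image $\cS$ and $(U_1,\ldots,U_N)$ its characteristic vector. By Claim~1 in the proof of Theorem~\ref{fibers}, for each $i\in[n]$ there is an index $j_i$ with $U_{j_i}=\{i\}^+$, and these $j_i$ are pairwise distinct because the sets $\{i\}^+$ are. Set $I=\{j_1,\ldots,j_n\}$ and identify $S\subseteq[n]$ with $Y(S)=\{j_i:i\in S\}\subseteq I$. I then define $\phi(Y(S))=f(S)\setminus I$. The key observation is that
\[
j_i\in f(S)\iff S\in U_{j_i}=\{i\}^+\iff i\in S,
\]
so $f(S)\cap I=Y(S)$ and therefore $f(S)=Y(S)\cup\phi(Y(S))$, giving $\cS=\{Y\cup\phi(Y):Y\subseteq I\}$. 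Inclusion-preservation of $\phi$ is inherited from $f$: $Y(S_1)\subseteq Y(S_2)$ iff $S_1\subseteq S_2$, which forces $f(S_1)\subseteq f(S_2)$ and hence $\phi(Y(S_1))\subseteq\phi(Y(S_2))$.

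The only non-mechanical point is the choice of $I$: we need exactly one ``anchor'' coordinate per atom $\{i\}^+$, and Theorem~\ref{fibers} supplies both the existence and the distinctness of these coordinates. After that, everything reduces to cleanly splitting each $f(S)$ into its intersection with $I$ and its intersection with $[N]\setminus I$, so no further obstacle is expected.
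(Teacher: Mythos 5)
Your proof is correct and follows essentially the same route as the paper: both directions are handled identically, with the ``only if'' direction resting on the good characteristic vector from Theorem~\ref{fibers} to locate the anchor coordinates $I$ and split each $f(S)$ into $f(S)\cap I$ and $f(S)\setminus I$. The only (minor) difference is that you verify inclusion-preservation of $\phi$ directly from monotonicity of $f$, whereas the paper uses a slightly more roundabout counting argument on comparable pairs; your version is, if anything, cleaner.
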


   \begin{proof}

   Assume that there are such $I$ and $\phi$, so  that $\cS =   \{  Y\cup \phi(Y):   Y\subseteq I \}$. We need to show that $\cS$ induces a copy of $2^{[n]}$ in $2^{[N]}$. 
   We shall show that $f: 2^I\rightarrow  2^{[N]}$, where $f(Y) = Y\cup \phi(Y)$ is an embedding.  For this we need to observe  that for $Y, Y'\subseteq I$ we have $Y\subseteq Y'$ 
   iff $Y\cup \phi(Y)\subseteq Y'\cup \phi(Y')$. 
   Indeed, if $Y \subseteq Y'$,  then   since $\phi$ is inclusion preserving, $\phi(Y)\subseteq \phi(Y')$, so $Y\cup \phi(Y)\subseteq Y'\cup \phi(Y')$. 
   If $Y\cup \phi(Y)\subseteq Y'\cup \phi(Y')$, and $Y, Y'\subseteq I$, $\phi(Y), \phi(Y')\subseteq [N]\setminus I$,   we have that $Y\subseteq Y'$.

The other way around, assume that $\cS$ is a copy of $2^{[n]}$ in $2^{[N]}$ obtained as a result of an embedding~$f$. 
From Theorem \ref{fibers}, we know that the characteristic vector $(U_1, \ldots, U_N)$ of $f$ is good, i.e., there is an index set $I$ of size $n$ such that 
$U_{g(i)} = \{i\}^+$ for $i\in [n]$ and $g$ is a bijection from $[n]$ to $I$. 
This means that $\{f(S) \cap I : S \subseteq [n]\} = 2^I$. Since $|\cS| = 2^{|I|}$, there is for each $Y\subseteq I$ a unique $\phi(Y)\subseteq [N] \setminus I$ such that 
some set $S\in \cS$ is equal to $Y\cup \phi(Y)$. 
It remains to show that $\phi$ is inclusion preserving. 
Assume that for some $Y, Y'\subseteq I$, $Y\subseteq Y'$ and $\phi(Y)\not\subseteq \phi(Y')$. 
Then the corresponding sets $Y\cup \phi(Y)$ and $Y'\cup \phi(Y')$ are not comparable.
Thus the number of comparable pairs  is $\cS$ is strictly less that the number of comparable pairs in $2^I$.
This is a contradiction since $\cS$ forms an copy of $2^I$.
\end{proof}

   \subsection{Counting copies of $Q_n$}
   \begin{proof}[Proof of Theorem \ref{enum}]
   It follows from Theorem \ref{fibers}, that the number $e(n,N)$ of copies of $Q_n$ in $Q_N$ is equal to the number of good sequences. 
 Each upper closed set is uniquely determined by an antichain of its minimal elements. For the upper bound, 
 observe that there are $\binom{N}{n}$ ways to choose the positions of $\{1\}^+, \ldots, \{n\}^+$ in the sequence.
 There are $n!$ ways to place those in these positions. The remaining $N-n$ positions could be occupied by any upper closed set, so 
 there are $a(n)^{N-n}$ ways to choose them. All together there are at most  $\binom{N}{n}n! a(n) ^{N-n} = \frac{N!}{(N-n)!} a(n)^{N-n}$ ways to form such a good sequence.
 For the lower bound, we count only the good sequences with each $\{i\}^+$ appearing exactly once. 
 There are again $\frac{N!}{(N-n)!} $ ways to place $\{1\}^+, \ldots, \{n\}^+$ in $n$ positions in the sequence and there 
 are $(a(n)-n)^{N-n}$ ways to place upper sets different from these in the remaining $N-n$ positions.
 
 Note that one could provide an exact formula for $e(N, n)$ in terms of $a(n)$   by counting the words of length $N$  over the alphabet $[a(n)]$, 
 containing each of the  letters $1, 2, \ldots, n$ at least once.

 Finally, to prove the last statement of the Theorem, observe that
 $a(n) = 2^{\binom{n}{\lfloor \frac{n}{2} \rfloor }(1+ O(\log n/n))}$, see~\cite{Kleitman75}, 
 and $\frac{N!}{(N-n)!} \leq  N^n  = 2^{n\log N} $.
 \end{proof}

 \section{Multicolor Ramsey numbers}\label{Multicolor}
 
 \begin{proof}[Proof of Theorem \ref{multicolor}]
 Consider a coloring $c$ of $Q_N$ in $k$ colors and having no monochromatic copy of $P$. Then each color class contains no copy of $P$ and thus 
 Lubell mass of each color class $C_i$ is $\ell(C_i) \leq \lambda^*(P)$. 
 Now, from the definition $\ell(2^{[N]}) = \sum_{i=1}^{k} \ell(C_i) \leq k \lambda^*(P)$. 
 On the other hand, $\ell(2^{[N]}) = \sum_{i=0}^{N}  \binom{N}{i} \binom{N}{i}^{-1} = N+1$. 
 Thus $N+1 \leq k \lambda^*(P) = \Theta(k)$. 
 
 To show that $R_k(P) \geq \Omega(k)$, consider a layered copy of $Q_{k-1}$ with each  layer of own color. Then each color class is an antichain, thus does not contain a copy of  $P$. 
 So, $R_k(P) \geq k$.
 \end{proof}

 \section{Boolean algebras} \label{boolean-algebra}

 While     \textcite{Gunderson99}  consider  a multicolor Ramsey problem for Boolean algebras when the dimension of the 
 desired monochromatic Boolean lattice is fixed and the number of colors grows,  we consider  a $2$-colored case. 
 Note that Layered Lemma \ref{layer}  immediately gives a very large upper bound on $R_{\mathsf{Alg}}(n)$  in terms of repeated application of hypergraph Ramsey's Theorem and Hilbert's Theorem.   Indeed, we know from Lemma \ref{layer} that sufficiently large $2$-colored Boolean lattice   contains  a  layered Boolean algebra $B$ of dimension $N'$ with sets in layer $i$ of size $i$. 
Consider a coloring $c'$ of $\{0, 1, \ldots, N'\}$, where $c'(i)$ is the color of elements in layer $i$, $i= 0, \ldots, N'$.
If $N'> h(n,2)$ then there is a monochromatic, say red,  Hilbert cube of dimension $n$  in $c'$, say  $\{ x_0 +  \sum_{i\in I} x_i:    I\subseteq [n]\}$.
This means that all subsets of $S_n$ of sizes $x_0,   x_0+x_1, x_0+x_2, \ldots, x_0+x_n,     x_0+x_1+x_2, \ldots,  x_0+x_1 + \cdots + x_n$  are red. 
Pick disjoint subsets  $X_0, X_1, \ldots, X_n$ of  $B$  of sizes $x_0, \ldots, x_n$ respectively. We can do this since $N'\geq x_0+x_1 + \cdots + x_n$.
Then these sets generate a monochromatic Boolean algebra of dimension $n$. \\

 \begin{proof}[Proof of Theorem \ref{Algebra}]
 
 To get the  first expression in the upper bound,
consider a bound on $b(N,n)$, the largest  number of elements in $Q_N$ not containing  a Boolean algebra of dimension $n$,   given in \cite{Gunderson99}:
 $$b(N, n) \leq 10^{n} 2^{-\frac{1}{2^{n-1}}} n^{n-\frac{1}{2^n}} \cdot N^{-\frac{1}{2^n}} 2^N.$$  
 Thus if $N= 2^{2^{n+1} n \log n}$ then there is a color class of size at least $2^N/2 > b(N,n)$. Therefore there is a monochromatic Boolean algebra of dimension $n$. 
 
 To get the second  expression in the upper bound, we use the ideas from  \cite{Gunderson99} again. 
 Consider a coloring $c$ of $Q_N$. Split $[N]$ into $n$  pairwise disjoint sets $N_1, \ldots, N_n$ of almost equal sizes, 
 further consider $\cC_i$ to be a longest chain in a family of subsets of $N_i$,  $i=1, \ldots, n$.
 Consider a complete  $n$-uniform $n$-partite hypergraph with parts $\cC_i$, $i=1,\ldots, n$.
 Color a hyperedge $\{X_1, X_2, \ldots, X_n\}$ with color $c(X_1\cup \cdots \cup X_n)$. 
 If $N/n \geq R_h(K^n(2, \ldots, 2))$ then we have a monochromatic $K^n(2, \ldots, 2)$ with 
 partite sets $\{X_1,  X_1'\}, \ldots, \{X_n, X_n'\}$, where $X_i\subseteq X_i'$ and $X_i' \cap X_j' = \emptyset$ for $i\neq j$.  This corresponds to a monochromatic Boolean algebra with sets  $Y_0\cup \{ \cup_{i\in I} Y_i: I\subseteq [n]\}$, 
 where $Y_0= X_1 \cup \cdots \cup X_n$ and $Y_i = X_i'\setminus X_i$, $i=1, \ldots, n$.

 For the lower bound, we use the layered coloring of $Q_N$ with $N= h(n,2)-1$ in red and blue, so that 
 the indices of blue layers form a set without affine cube of dimension $n$ and so do the indices of the red layers.
 It was noticed in \cite{Brown-Erdos}, that it is easy to see that $h(n,2)\geq 2^{cn}$. 
   \end{proof}

 \section{Conclusions}\label{Conclusions}
 In this paper, we initiated the study of Ramsey theory for copies of posets in Boolean lattices. 
 Compared with at least  exponential behavior of a similar Ramsey number for Boolean algebras, 
 we obtain the bounds $ 2n-1 \leq R(Q_n, Q_n)\leq n^2 +2n$. 
 We give a tight asymptotical expression of the number of embeddings of $Q_n$ into $Q_N$ and
show that the multicolor Ramsey number for posets is linear in the number of colors.

 \noindent
 The question of determining the correct value for $R(Q_n, Q_n)$ remains one of the interesting here.
 The fact that $R(Q_3, Q_3)\in \{7,8\}$ suggests that neither of our bounds for $R(Q_n, Q_n)$ is tight.
 Another interesting question is how the colorings of $Q_N$, for $N=R(Q_n, Q_n)-1$ and no monochromatic
 copies of $Q_n$ look like. So far, for all lower bounds on Boolean lattices and Boolean algebras, only layered colorings were 
 considered. Here, we presented a non-layered coloring of $Q_6$ with no monochromatic copy of $Q_3$. Note that any 
 layered coloring of $Q_6$ has a monochromatic $Q_3$.   For some additional Ramsey-type results in the Boolean lattice, see 
 \textcite{walzer}.
 
%
%
%
%
%

 \section{Acknowledgements}  The authors thank Tom Trotter, Dwight Duffus, Kevin Milans,   and  David Gunderson for inspiring discussion. Particular thanks go to 
 Dwight Duffus for bringing  \cite{Gunderson99} to author's attention and mentioning different sorts of embeddings. The authors thank also Torsten Ueckerdt and Jonathan Rollin 
 for discussions.

  \printbibliography

\end{document}